\theoremstyle{plain}  
\newtheorem{theorem}{Theorem}[section]
\newtheorem{lemma}{Lemma}[section]
\newtheorem{proposition}{Proposition}[section]
\newtheorem{corollary}{Corollary}[section]
\numberwithin{equation}{section}
\theoremstyle{remark}
\newtheorem{remark}{Remark}[section]
 \numberwithin{equation}{section}
\def\<{\left < }
\def\>{\right >}
\def\({\left ( }
\def\){\right )}
\def\e{\eqref}
\def\o{\omega}
\begin{document}

\vskip.3 in

\title[A Wintgen type inequality  and its applications]
{A Wintgen type inequality for surfaces in 4D neutral pseudo-Riemannian space forms and its applications to minimal immersions}

\author[B.-Y. Chen]{Bang-Yen Chen}

 \address{Department of Mathematics, 
	Michigan State University, East Lansing, Michigan 48824--1027, USA}
	
	\email{bychen@math.msu.edu}

\begin{abstract}  Let $M$ be a space-like surface immersed in a 4-dimensional pseudo-Riemannian space form $R^4_2(c)$ with constant sectional curvature $c$ and index two. In the first part of this article,  we prove that the Gauss curvature $K$, the normal curvature  $K^D$, and mean curvature vector $H$ of $M$ satisfy the general inequality: $K+K^D\geq \<H,H\>+c$. In the second part,  we investigate space-like minimal surfaces in $R^4_2(c)$ which satisfy the equality case of the inequality identically. Several classification results in this respect are then obtained.
\end{abstract}

\keywords{Inequality, minimal surface, pseudo-hyperbolic 4-space, Gauss curvature, normal curvature.}

 \subjclass[2000]{Primary: 53C40; Secondary  53C50}
 
\maketitle

\section{Introduction.}

Let $\mathbb E_{t}^m$ denote the  pseudo-Euclidean $m$-space equipped with  pseudo-Euclidean metric of  index $t$ 
given by
\begin{align}\label{1.1}g_{t}= -\sum_{i=1}^{t} dx_{i}^{2} + \sum_{j=t+1}^{n}dx_{j}^{2}, \end{align}
where $(x_{1},\ldots,x_{m})$ is a rectangular coordinate system of  $\mathbb E_{t}^m$. 

 We put
\begin{align} &\label{1.2} S^k_s(c)=\left\{x\in \mathbb E^{k+1}_s: \<x,x\>=\frac{1}{c}>0\right\},\\&\label{1.3} H^k_s(c)=\left\{x\in \mathbb E^{k+1}_{s+1}: \<x,x\>=\frac{1}{c}<0\right\},\end{align}
where $\<\;\,,\:\>$ is the associated inner product. 
Then $S^k_s(c)$ and $H^k_s(c)$ are complete pseudo-Riemannian manifolds of constant curvature $c$ and with index $s$, which are known as {\it pseudo-Riemannian $k$-sphere} and the {\it pseudo-hyperbolic $k$-space}, respectively. The pseudo-Riemannian manifolds $\mathbb E^k_s, S^k_s(c)$ and $H^k_s(-c)$ are called {\it pseudo-Riemannian space forms}.

A  vector $v$ is called {\it space-like} (respectively, {\it time-like}) if  $\<v,v\>>0$ (respectively, $\<v,v\><0$). A vector $v$ is called  {\it light-like} if it is  nonzero and it satisfies $\<v,v\>=0$. A surface $M$ in a pseudo-Riemannian manifold is called {\it space-like} if each nonzero tangent vector is space-like.

Let $M$ be a space-like surface immersed in a 4-dimensional pseudo-Riemannian space form  $R^4_2(c)$ with constant sectional curvature $c$ and index 2. In section 3, we recall a minimal immersion of $H^2(-\frac{1}{3})$ into the neutral pseudo-hyperbolic 4-space $H^4_2(-1)$ discovered recently by the author in \cite{c4}. In section 4,
we prove  that the Gauss curvature $K$, the normal curvature $K^D$, and mean curvature vector $H$ of $M$ in $R^4_2(c)$ satisfy the following general inequality:
 \begin{align} \label{1.4}K+K^D\geq \<H,H\>+c.\end{align} 
 In this section, we also show that there exist many minimal space-like surfaces which satisfy the equality case of this inequality.
In section 5, we investigate space-like minimal surfaces in the neutral pseudo-hyperbolic 4-space $H^4_2(-1)$ which satisfy the equality case of the inequality. In particular, we prove that if $K+1$ is a logarithm-harmonic function, then the minimal surface satisfies the equality case of \e{1.4} identically if and only if, up to rigid motions of $H^4_2(-1)$, the minimal surface is congruent to the recently discovered minimal surface described in section 3.
In the last two sections, we study minimal space-like surfaces in $\mathbb E^4_2$ and in $S^4_2$ which satisfy the equality case of \e{1.4}. Several classification results in this respect are then obtained.

\section{Preliminaries.}

\subsection{Basic formulas and definitions}
 Let  $R^4_2(c)$ denote the 4-dimensional  neutral pseudo-Riemannian space form  of constant  curvature $c$ and with index two.  Then the curvature tensor $\tilde R$ of $R^4_2(c)$ is given by
\begin{align} &\label{2.1}\tilde R(X,Y)Z= c\{ \left<X,Z\right>  Y-\left<Y,Z\right>X\}  \end{align} for vectors $X,Y,Z$ tangent to $R^4_2(c)$.
   Let $\psi:M \to R^4_2(c)$ be an isometric immersion of a space-like surface $M$ into $R^4_2(c)$.      
Denote by $\nabla$ and $\tilde\nabla$ the Levi-Civita connections on $M$ and $R^4_2(c)$, respectively. 

For vector fields $X,Y$ tangent to $M$ and vector field $\xi$ normal to $M$, the formulas of Gauss and Weingarten are given respectively by (cf. \cite{c1,c2,O}):
\begin{align} &\label{2.2}\tilde \nabla_XY=\nabla_XY+h(X,Y), \;\;
\\& \label{2.3}\tilde \nabla_X \xi=-A_\xi X+D_X\xi,\end{align} where $\nabla_X Y$ and $A_\xi X$ are the tangential components and $h(X,Y)$ and $D_X\xi$ are the normal components of $\tilde \nabla_XY$ and $\tilde \nabla_X \xi$, respectively. These formulas define the second
fundamental form $h$, the shape operator $A$, and the normal connection $D$ of $M$ in $R^4_2(c)$. 
 
 For each normal vector $\xi\in T_x^{\perp}M$,  $A_{\xi}$ is a symmetric endomorphism of the tangent space $T_xM,\,x\in M$. The shape operator and the second fundamental form are related by
\begin{align}\label{2.4} \<h(X,Y),\xi\>=\<A_{\xi}X,Y\>.\end{align}
 
 The {\it mean curvature vector}  $H$ of $M$ in $R^4_2(c)$ is defined by \begin{align} H=\(\frac{1}{2}\){\rm trace}\, h.\end{align}
The {\it mean curvature} of $M$ in $R^4_2(c)$ is defined to be $\sqrt{-\<H,H\>}$.

The equations of Gauss, Codazzi and Ricci are given
respectively by
\begin{align} &\label{2.6} R(X,Y)Z =  \left<X,Z\right>  Y-\left<Y,Z\right>X  + A_{h(Y,Z)}X- A_{h(X,Z)}Y,\\
&  \label{2.7} (\bar \nabla_X h)(Y,Z)=(\bar\nabla_Y h)(X,Z),\\ \label{2.8}
& \<R^{D}(X,Y)\xi,\eta\> = \<[A_{\xi},A_{\eta}]X,Y\>,\end{align}
for vector fields $X,Y,Z$  tangent to $M$ and $\xi$  normal to $M$, where $\bar\nabla h$ is defined by
\begin{equation}\begin{aligned}& (\bar\nabla_X h)(Y,Z) = D_X h(Y,Z) - h(\nabla_X Y,Z) - h(Y,\nabla_X Z),\end{aligned}\end{equation} and $R^D$ is the curvature tensor associated with the normal connection $D$, i.e., 
\begin{align}\label{2.10} &R^D(X,Y)\xi=D_XD_Y\xi-D_Y D_X\xi-D_{[X,Y]}\xi.\end{align}

For a space-like surface $M$ in  $R^4_2(c)$, the normal curvature $K^D$ is given by
\begin{align} \label{2.11} K^D=\<R^D(e_1,e_2)e_3,e_4\>. \end{align} 

A surface $M$ in $R^4_2(c)$ is called {\it a parallel surface\/} if   $\bar\nabla h=0$ holds identically.
An immersion $\psi$ of a surface $M$ in a pseudo-hyperbolic $4$-space $R^4_2(c)$ is called {\it full\/} if $\psi(M)$ does not lies in any totally geodesic submanifold of $R^4_2(c)$.  

The surface $M$ in $R^4_2(c)$ is called {\it totally umbilical} if the second fundamental form $h$ of $M$ satisfies $h(X,Y)=g(X,Y)\xi, \forall X,Y\in TM,$ for some normal vector field $\xi$. 

For an immersion $\psi:M\to H^4_2(-1)$  of  $M$ into $H^4_2(-1)$,
let $$\phi=\iota\circ \psi:M\to \mathbb E^5_3$$ denote the composition of $\psi$ with the standard inclusion $\iota:H^4_2(-1)\to \mathbb E^5_3$ via \e{1.3}.  

 Denote by $\tilde \nabla$ and
$\nabla$ the Levi-Civita connections of $\mathbb E^5_3$ and of $M$, respectively. Let $h$ be the second
fundamental form of $M$ in $H^4_2(-1)$. Since
 $H^{4}_2(-1)$ is totally umbilical  with one as its
mean curvature in  $\mathbb E^5_3$, we have  \begin{align}\label{2.16}\tilde \nabla_XY=\nabla_XY
+h(X,Y)+\phi\end{align} for $X,Y$ tangent to $M$.

\subsection{Connection forms}
Let $\{e_1,e_2\}$ be an orthonormal frame of the tangent bundle $TM$ of $M$. Then we have 
  \begin{align}\label{2.12} & \<e_1,e_1\>=\<e_2,e_2\>=1,\, \<e_1,e_2\>=0.\end{align}
  We  may choose  an orthonormal normal frame $\{e_3,e_4\}$ of $M$ in $R^4_2(c)$  such that 
  \begin{align} \label{2.13}& \<e_3,e_3\>=\<e_4,e_4\>=-1,\; \<e_3,e_4\>=0.\end{align}

For the orthonormal frame $\{e_1,e_2,e_3,e_4\}$,  we put
\begin{align}\label{2.14} \nabla_X e_1=\omega_1^2(X) e_2,\;\; D_X e_3=\omega_3^4(X)e_4,\end{align}
where $\o_1^2$ and $\o_3^4$ are the connection forms of the tangent and the normal bundles.

The  Gauss curvature $K$ and the normal curvature $K^D$ of $M$ are related with the connection forms $\omega_1^2$ and $\omega_3^4$ by
\begin{align}\label{2.15} d\omega_1^2=-K (*1),\;\; d\omega_3^4=-K^D (*1),\end{align} where $*$ is the Hodge star operator of $M$.

\subsection{Ellipse of curvature}
The {\it ellipse of curvature}  of a surface $M$ in  $R^4_2(c)$  is
the subset of the normal plane defined as $$\{h(v, v) \in T^\perp_pM:   |v |=1, v\in T_pM,\,p\in M\}.$$
To see that it is an ellipse, we consider an arbitrary orthogonal tangent frame
$\{e_1,e_2\}$. Put $h_{ij}=h(e_i,e_j),i,j=1,2$ and look at the following formula
\begin{align} \label{2.17}h(v,v)=H+\frac{h_{11}-h_{22}}{2}\cos 2\theta +h_{12}\sin 2\theta,\;\; v=\cos \theta e_1+\sin\theta e_2.\end{align}
 As $v$ goes once around the unit
tangent circle, $h(v,v)$ goes twice around the ellipse. The ellipse of curvature could degenerate into a line segment or a point. 

 The center of the ellipse is $H$. The ellipse of curvature is a circle if and only if
the following two conditions hold:\begin{align} \label{2.18} & |h_{11}-h_{22}|^2=4 |h_{12}|^2, \;\;\; \<h_{11}-h_{22},h_{12}\>=0.\end{align}

\section{A minimal immersion of $H^2(-\frac{1}{3})$ into $H^4_2(-1)$.}

In this section, we recall a minimal immersion of $H^2(-\frac{1}{3})$ into $H^4_2(-1)$ discovered recently in \cite{c4}.

  Consider the map  $\phi :{\bf R}^2\to \mathbb E^5_3$ defined by
\begin{equation}\begin{aligned}& \phi(s,t)=\text{\small$\Bigg($}\sinh \Big(\text{\small$\frac{2s}{\sqrt{3}}$}\Big)-\text{\small$ \frac{t^2}{3}$}-\(\text{\small$\frac{7}{8}+\frac{t^4}{18}$}\)e^{\frac{2s}{\sqrt{3}}}, t+\( \text{\small $\frac{t^3}{3}-\frac{t}{4}$}\)e^{\frac{2s}{\sqrt{3}}},\\& \hskip.1in \label{3.1}
 \text{\small $\frac{1}{2}$} + \text{\small $\frac{t^2}{2}$} e^{\frac{2s}{\sqrt{3}}},  t+ \(\text{\small $\frac{t^3}{3}$}+ \text{\small $\frac{t}{4}$}\)e^{\frac{2s}{\sqrt{3}}},
 \sinh \Big(\text{\small$\frac{2s}{\sqrt{3}}$}\Big)-\frac{t^2}{3}-\text{\small$\(\frac{1}{8}+\frac{t^4}{18}\)$}e^{\frac{2s}{\sqrt{3}}}\text{\small$\Bigg)$}. \end{aligned}\end{equation}
 The position vector $x$ of $\phi$ satisfies $\<x,x\>=-1$ and the induced metric via $\phi$ is  $g=ds^2+e^{\frac{2s}{\sqrt{3}}}dt^2.$ Thus, $\phi$ defines an isometric immersion  
 $\psi_{\phi}:H^2(-\frac{1}{3})\to H^4_2(-1)$  
  of the hyperbolic plane $H^2(-\frac{1}{3})$ of constant curvature $-\frac{1}{3}$ into $H^4_2(-1)$. This surface  satisfies $K^D=2K=-\frac{2}{3}$. So, we have $K+K^D=-1$.
  
  It was proved in \cite{c4} that, up to rigid motions,  $\psi_{\phi}:H^2(-\frac{1}{3})\to H^4_2(-1)$ is the only parallel minimal space-like surface lying fully in $H^4_2(-1)$.
  
  Recently, B.-Y. Chen and B. D. Suceav\u{a} proved in \cite{cs} the following classification  theorem.

\begin{theorem} \label{T:3.1} Let $\psi:M\to H^4_2(-1)$ be a minimal immersion of a space-like surface $M$ into $H^4_2(-1)$. If  the Gauss curvature $K$ and the normal curvature $K^D$ of $M$ are constant, then one of the following three statements holds.

\vskip.04in
{\rm (1)} $K=-1, K^D=0,$ and $\psi$ is totally geodesic.

\vskip.04in
{\rm (2)} $K=K^D=0$ and $\psi$ is congruent to an open part of the  minimal surface defined by
\begin{align}\label{3.2} &\hskip.0in L(u,v)=\text{\small$\frac{1}{\sqrt{2}}$}  \( \cosh u, \cosh v,0, \sinh u,  \sinh v \).\end{align}
 
\vskip.04in
{\rm (3)} $K^D=2K=-\frac{2}{3}$ and $\psi$ is congruent to an open part of the minimal surface $\psi_{\phi}:H^2(-\frac{1}{3})\to H^4_2(-1)$ induced from  \e{3.1}.
\end{theorem}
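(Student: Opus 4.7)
The plan is to set up adapted orthonormal frames, translate the constant-curvature hypothesis into an overdetermined algebraic-differential system for the second fundamental form, and case-split on $K^D$. Choose $\{e_1,e_2\}$ on $TM$ with $\<e_i,e_i\>=1$ and $\{e_3,e_4\}$ on $T^\perp M$ with $\<e_j,e_j\>=-1$; by minimality write $h(e_1,e_1)=-h(e_2,e_2)=\alpha e_3+\beta e_4$ and $h(e_1,e_2)=\gamma e_3+\delta e_4$. The Gauss and Ricci equations with $c=-1$ immediately yield
\[
K+1=\alpha^2+\beta^2+\gamma^2+\delta^2, \qquad K^D=2(\beta\gamma-\alpha\delta),
\]
so $K\ge -1$, with equality iff $h\equiv 0$; this disposes of case (1), forcing $K^D=0$ as well. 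Henceforth assume $K>-1$.

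If $K^D=0$, then the trace-free symmetric shape operators $A_{e_3},A_{e_4}$ commute, hence are proportional; rotating $\{e_3,e_4\}$ brings us to $\beta=\delta=0$ with $\alpha^2+\gamma^2=K+1>0$. Substituting into the four Codazzi equations \eqref{2.7}, the two $e_4$-components form a $2\times 2$ linear system in $\omega_3^4(e_1),\omega_3^4(e_2)$ of determinant $-(\alpha^2+\gamma^2)\ne 0$, forcing $\omega_3^4\equiv 0$. Parametrizing $\alpha=\sqrt{K+1}\cos\theta$, $\gamma=\sqrt{K+1}\sin\theta$ reduces the remaining two Codazzi equations to $d\theta=-2\omega_1^2$, and exterior differentiation together with \eqref{2.15} forces $K=0$. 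Because $e_4$ is then parallel in $\mathbb E^5_3$ (hence a constant vector), $M$ lies in a totally geodesic $H^3_1(-1)\subset H^4_2(-1)$, and integrating the resulting structure equations identifies $M$, up to rigid motion, with an open part of $L(u,v)$ in \eqref{3.2}: case (2).

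If $K^D\ne 0$, the identity
\[
K+1\pm K^D=(\alpha\mp\delta)^2+(\beta\pm\gamma)^2
\]
shows both right-hand sides are constants. Passing to local isothermal coordinates with $g=\lambda^2|dz|^2$ and introducing the complex quantities $\mu=(\alpha+\delta)+i(\beta-\gamma)$ and $\nu=(\alpha-\delta)-i(\beta+\gamma)$, the Codazzi equation for a minimal surface becomes the twisted-holomorphic pair $\partial_{\bar z}(\lambda^2\mu)=-i\omega_3^4(\partial_{\bar z})\lambda^2\mu$ and $\partial_{\bar z}(\lambda^2\nu)=+i\omega_3^4(\partial_{\bar z})\lambda^2\nu$. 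If both $\mu,\nu$ were nonzero (strict inequality in Wintgen), subtracting the logarithmic derivatives of these two identities gives $d(\arg\mu-\arg\nu)=-2\omega_3^4$, which forces $d\omega_3^4=0$, hence $K^D=0$---a contradiction. So Wintgen's equality $K+K^D=-1$ must hold, say $\nu\equiv 0$; after a further normal rotation the second fundamental form takes the normal form $h_{11}=\rho e_3$, $h_{12}=\rho e_4$, $h_{22}=-\rho e_3$ with $\rho=\sqrt{(K+1)/2}$ constant. The remaining Codazzi equations now give $\omega_3^4=2\omega_1^2$, so $K^D=2K$, which together with $K+K^D=-1$ yields $K=-\tfrac13$, $K^D=-\tfrac23$. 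A direct check against the connection data shows $\bar\nabla h\equiv 0$, so $M$ is parallel; invoking the classification of parallel minimal space-like surfaces in $H^4_2(-1)$ from \cite{c4} (recalled before the theorem) forces $M$ to be congruent to an open part of $\psi_\phi$: case (3). The hardest step is the complex-Codazzi argument of this last paragraph---both in extracting $d\omega_3^4=0$ in the generic subcase and in verifying parallelism in the equality subcase---after which the rest is bookkeeping.
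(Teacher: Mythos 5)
There is nothing in the paper to compare your argument against: Theorem \ref{T:3.1} is quoted from the reference \cite{cs} and the paper supplies no proof of it, so your proposal has to stand or fall on its own. On that basis it is a credible and essentially correct proof outline. Your Gauss/Ricci identities $K+1=\alpha^2+\beta^2+\gamma^2+\delta^2$ and $K^D=2(\beta\gamma-\alpha\delta)$ are correct for the neutral signature (note the sign reversal relative to the Riemannian case, which is what makes $K\ge -1$ rather than $K\le -1$), and the factorizations $K+1\pm K^D=(\alpha\mp\delta)^2+(\beta\pm\gamma)^2$ drive the case split cleanly. I checked the two delicate computations in the $K^D=0$ branch: the $e_4$-components of Codazzi do form a system with determinant $-(\alpha^2+\gamma^2)\ne 0$ forcing $\omega_3^4=0$, and the remaining components do collapse to $d\theta=-2\omega_1^2$, whence $K=0$ by \eqref{2.15}. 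In the $K^D\ne 0$ branch, the constancy of $|\mu|,|\nu|$ together with the twisted-holomorphicity of the Codazzi data correctly yields $d\omega_3^4=0$ unless one of $\mu,\nu$ vanishes, and the $\nu\equiv 0$ case (the $\mu\equiv 0$ case being the same surface with reversed normal orientation, which you should say explicitly) does give $\omega_3^4=2\omega_1^2$, $K^D=2K=-\frac{2}{3}$, $\bar\nabla h=0$, and hence case (3) via the parallel classification recalled in Section 3.

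Two steps are asserted rather than proved and deserve attention. First, the endgame of case (2): "integrating the resulting structure equations identifies $M$ with \eqref{3.2}" hides a genuine (if standard) sub-classification. You correctly reduce to a flat minimal space-like surface in a totally geodesic $H^3_1(-1)$ (the orthogonal complement of the constant time-like vector $e_4$), but you should then either carry out the integration or cite the uniqueness of such surfaces; as written this is the one place where the conclusion does not yet follow from what is on the page. Second, the complex Codazzi identities $\partial_{\bar z}(\lambda^2\mu)=-i\,\omega_3^4(\partial_{\bar z})\lambda^2\mu$ and its companion are stated without derivation; they are the standard complexification of \eqref{2.7} for minimal surfaces and are correct up to sign conventions, but a referee would want them checked, since the whole exclusion of the "generic" subcase rests on them. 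With those two points filled in, the argument is complete and is, as far as one can tell, independent of whatever route \cite{cs} takes.
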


\begin{remark} If $M$ is a space-like totally geodesic surface in $H^4_2(-1)$, then the surface is congruent to an open part of the surface in $H^4_2(-1)$ induced from 
\begin{align}L(u,v)=(\cosh u\cosh v,0,0,\cosh u\sinh v, \sinh u)\end{align}
via \e{1.3}. \end{remark}

\section{A Wintgen type inequality for space-like surfaces in $R^4_2(c)$.} 

We need the following result for later use.

\begin{theorem}\label{T:4.1}  Let $M$ be a space-like  surface in a 4-dimensional neutral pseudo-Riemannian space form $R^4_2(c)$ of constant sectional curvature $c$. Then we have
 \begin{align} \label{4.1}& K+K^D\geq \<H,H\>+c \end{align}
 at every point in $M$.
 
 The equality sign of \e{4.1} holds at a point $p\in M$ if and only if, with respect to some suitable orthonormal frame $\{e_1,e_2,e_3,e_4\}$ at $p$,  the shape operators at $p$ take the forms:
   \begin{align}& \label{4.2} A_{e_3}=\begin{pmatrix} 2\gamma+\mu &0\\0 & \mu\end{pmatrix}, \; A_{e_4}=\begin{pmatrix} 0 &\gamma\\\gamma & 0 \end{pmatrix}.\end{align}\end{theorem}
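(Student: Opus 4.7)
The plan is to work in an arbitrary orthonormal frame $\{e_1,e_2,e_3,e_4\}$ adapted to $M$ as in \e{2.12}--\e{2.13} and write the two shape operators as symmetric $2\times 2$ matrices
\[ A_{e_3} = \begin{pmatrix} p & q \\ q & r \end{pmatrix}, \qquad A_{e_4} = \begin{pmatrix} s & t \\ t & u \end{pmatrix}, \]
then express $K$, $K^D$, and $\<H,H\>$ in terms of these entries. Using the Gauss equation \e{2.6} together with \e{2.4}, and tracking the minus signs coming from $\<e_3,e_3\>=\<e_4,e_4\>=-1$, one obtains
\[ K = c + q^{2} + t^{2} - pr - su, \qquad \<H,H\> = -\tfrac{1}{4}\bigl((p+r)^{2} + (s+u)^{2}\bigr), \]
while the Ricci equation \e{2.8} combined with \e{2.11} gives $K^D = q(s-u) - t(p-r)$.

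The decisive step is the algebraic identity
\[ K + K^{D} - \<H,H\> - c \;=\; \Bigl(q+\tfrac{s-u}{2}\Bigr)^{2} + \Bigl(t-\tfrac{p-r}{2}\Bigr)^{2}, \]
which is verified by completing the square using $-pr+\tfrac{1}{4}(p+r)^2=\tfrac{1}{4}(p-r)^2$ and its analogue in $s,u$. Being a sum of squares, this proves \e{4.1}. Notice that the direction of the inequality is the opposite of the classical Wintgen inequality in $\mathbb E^{4}$; this reversal is traceable to the negative-definite signature of the normal plane, which flips the signs of the normal-curvature terms in the Gauss and Ricci identities. Equality at a point $p$ is then equivalent to $q=(u-s)/2$ and $t=(p-r)/2$.

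To obtain the canonical form \e{4.2}, I would adjust the frame in two stages. First, since the normal plane at $p$ is negative-definite, an $O(2)$ rotation of $\{e_3,e_4\}$ lets me place $H$ along $e_3$ (or leaves $H=0$); in either case $s+u=0$. Second, an orthogonal rotation of $\{e_1,e_2\}$ diagonalises the symmetric operator $A_{e_3}$, giving $q=0$. The equality condition $q=(u-s)/2$ together with $u=-s$ then forces $s=u=0$, and the condition $t=(p-r)/2$ lets us set $\gamma:=t$, $\mu:=r$, so that $p=2\gamma+\mu$; this is precisely \e{4.2}. Conversely, the forms in \e{4.2} obviously satisfy the equality condition.

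The principal obstacle is not conceptual but bookkeeping: with both normal directions time-like, several sign flips appear throughout the Gauss and Ricci formulas, and arranging them so that the combination $K+K^{D}-\<H,H\>-c$ collapses into a clean sum of squares is the only place where one must be careful. Once that identity is in hand, both the inequality and the characterisation of equality are immediate.
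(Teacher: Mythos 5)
Your proof is correct and follows essentially the same route as the paper: express $K$, $K^D$, and $\<H,H\>$ through the Gauss and Ricci equations (with the sign flips from the negative-definite normal plane) and exhibit $K+K^D-\<H,H\>-c$ as a sum of two squares. The only cosmetic difference is that the paper normalises the frame first (diagonalising $A_{e_3}$ and making $A_{e_4}$ trace-free, reducing to four parameters) and then completes the square, whereas you derive the frame-independent identity with six parameters and adapt the frame afterwards to reach \e{4.2}; both yield the same equality conditions.
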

\begin{proof} Assume that  $\psi:M\to R^4_2(c)$ is an isometric immersion of a  space-like  surface $M$ into a pseudo-Riemannian space form $R^4_2(c)$ of constant sectional curvature $c$. 

If $p\in M$ is totally geodesic point, i.e., $h(p)=0$, then we have $K(p)=-1$ and $K^D(p)=0$. So we have $K+K^D=c$ at $p$.

If $p\in M$ is a non-totally geodesic point, then we may choose an orthonormal  frame $\{e_1,e_2,e_3,e_4\}$ at $p$ such that the shape operators at $p$  satisfy
  \begin{align}& \label{4.3} A_{e_3}=\begin{pmatrix} \alpha &0\\0 & \mu\end{pmatrix}, \;\;\;  A_{e_4}=\begin{pmatrix} \delta &\gamma\\\gamma & -\delta \end{pmatrix}\end{align}
 for some functions $\alpha,\gamma,\delta,\mu$, with respect to $\{e_1,e_2,e_3,e_4\}$.  
 
 From \e{2.4}, \e{2.12}, \e{2.13} and \e{4.3} we know that the second fundamental form of $\psi$ satisfies
 \begin{align}\label{4.4}  &h(e_1,e_1)=-\alpha e_3-\delta e_4,\; h(e_1,e_2)=-\gamma e_4,\;  h(e_2,e_2)=-\mu e_3+\delta e_4.\end{align}
 
It follows from \e{4.4} and the equation of Gauss that the Gauss curvature $K$, the normal curvature $K^D$ and the mean curvature vector $H$ of $M$ at $p$ satisfy
 \begin{align} \label{4.5}&K(p)=-\alpha\mu+\gamma^2+\delta^2+c,
 \\& \label{4.6} K^D(p)=\gamma(\mu-\alpha),
 \\& \label{4.7} H(p)=\frac{\alpha+\mu}{2}e_3.\end{align}

From \e{4.5}-\e{4.7} we have
\begin{equation} \begin{aligned}\label{4.8}   K(p)+K^D(p)&=\<H(p),H(p)\>+\frac{1}{4}(2\gamma-\alpha+\mu)^2+\delta^2+c \\& \geq \<H(p),H(p)\>+c.
\end{aligned}\end{equation}
Consequently, we  obtain  inequality \e{4.1}. 

If the equality case of \e{4.1} holds at  $p\in M$, then \e{4.8} implies that we have $\delta=0$ and $\alpha=2\gamma+\mu$. Hence, we derive \e{4.2} from \e{4.3}. 

Conversely, if we have \e{4.2} at $p\in M$, then it is easy to verify that the equality sign of \e{4.1} holds at $p$.
\end{proof}

\begin{remark} Inequality \e{4.1} is a pseudo-hyperbolic version of an inequality of P. Wintgen obtained in \cite{W} (see, also \cite{GR}).
\end{remark}

\begin{remark} Every space-like  totally umbilical surface in $R^4_2(c)$ satisfies the equality case of \e{4.1} identically.
\end{remark}

\begin{remark} It follows from Theorem \ref{T:4.1} that if a space-like surface $M$ in $R^4_2(c)$ satisfies the equality case of inequality \e{4.1} identically, then $M$ is a Chen surface (in the sense of \cite{dpv,G}).
\end{remark}

\begin{remark} It follows from Theorem \ref{T:4.1} and conditions in \e{2.18} that if a space-like surface $M$ in $R^4_2(c)$ satisfies the equality case of inequality \e{4.1} identically, then it  has circular ellipse of curvature.
\end{remark}

\begin{remark} The minimal surface given by $\psi_\phi: H^2(-\frac{1}{3}) \to H^4_2(-1)$ discovered in \cite{c4} satisfies the equality case of \e{4.1} identically (with $H=0,c=-1)$.
\end{remark}

\begin{remark} On the neutral pseudo-Euclidean 4-space $\mathbb E^4_2$ equipped with the metric \begin{align}\label{4.9} g_2=-dx_1^2-dx_2^2+dx_3^2+dx_4^2,\end{align} we may consider the canonical complex coordinate system $\{z_1,z_2\}$ with $$z_1=x_1+{\bf i}\, x_2, z_2=x_3+{\bf i} \, x_4.$$ The complex structure on $\mathbb E^4_2$ obtained in this way is called the {\it standard complex structure on} $\,\mathbb E^4_2$.
In this way, we can regard ${\mathbb E}^4_2$ as a Lorentzian complex plane ${\bf C}^2_1$. \end{remark}

\begin{lemma} \label{L:4.1}  Every  space-like holomorphic curve in ${\bf C}^2_1$ satisfies the equality case of inequality \e{4.1} identically $($with $H=c=0).$ \end{lemma}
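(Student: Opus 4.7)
The plan is to parametrize the holomorphic curve locally as $\Phi(u,v) = (f_1(z), f_2(z))$ with $z = u + iv$ and $f_1,f_2$ holomorphic, and verify that at each point the shape operators realize the canonical form \e{4.2} (with $c=0$ and $H=0$). Once this is done, Theorem \ref{T:4.1} will simultaneously deliver $H = 0$ and the equality in \e{4.1}.

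The key structural input is that the complex structure $J$ on ${\bf C}^2_1$ (multiplication by $i$ in each complex coordinate) is an isometry of the neutral metric $g_2$ of \e{4.9}, because $g_2$ has the form $-|dz_1|^2 + |dz_2|^2$ and is manifestly invariant under $(z_1,z_2)\mapsto(iz_1,iz_2)$. The Cauchy--Riemann equations then give $\Phi_v = J\Phi_u$, so $\<\Phi_u,\Phi_u\> = \<\Phi_v,\Phi_v\> =: E$ and $\<\Phi_u,\Phi_v\> = 0$; space-likeness is $E > 0$. I set $e_1 = \Phi_u/\sqrt{E}$ and $e_2 = J e_1 = \Phi_v/\sqrt{E}$, an orthonormal tangent frame.

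Differentiating once more and using that each coordinate is holomorphic yields $\Phi_{vv} = -\Phi_{uu}$ and $\Phi_{uv} = J\Phi_{uu}$. Projecting to the normal bundle gives
\[ h(e_1,e_1) + h(e_2,e_2) = 0, \qquad h(e_1,e_2) = J\,h(e_1,e_1), \]
the first of which is already $H = 0$. Since $TM$ is positive definite inside the $(2,2)$-signature space $\mathbb E^4_2$, the normal plane $T^\perp M$ is negative definite; and since $J$ preserves $TM$ (as $Je_1 = e_2$) it also preserves $T^\perp M$. At any point where $n := h(e_1,e_1)\neq 0$, I set $e_3 = n/\sqrt{-\<n,n\>}$ and $e_4 = J e_3$, obtaining an orthonormal normal frame with $\<e_3,e_3\> = \<e_4,e_4\> = -1$ as required by \e{2.13}.

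In this frame, $h(e_1,e_1)$ and $h(e_2,e_2)$ are parallel to $e_3$ alone, while $h(e_1,e_2)$ is parallel to $e_4$ alone. Reading off the coefficients via \e{2.4} then forces $\delta = 0$ and $\alpha = 2\gamma + \mu$ in the notation of \e{4.3}--\e{4.4}, which is precisely the canonical form \e{4.2}; Theorem \ref{T:4.1} then supplies equality in \e{4.1} at such points. At a point where $n = 0$, the two displayed identities force $h = 0$ outright, so the surface is totally geodesic there and the equality is trivial. The main obstacle is sign bookkeeping in the pseudo-Riemannian setting: one must verify that the restriction of $g_2$ to $T^\perp M$ is negative definite and that the normalizations chosen indeed yield $\<e_3,e_3\> = \<e_4,e_4\> = -1$ and $Je_3 = e_4$ consistently.
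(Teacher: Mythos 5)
Your proposal is correct, and it reaches the same structural conclusion as the paper — that the complex structure $J$ forces the shape operators into the circular form \e{4.2} with $H=0$ — but by a different technical route. The paper argues invariantly: it takes $e_2=Je_1$, $e_4=Je_3$ and uses the parallelism $\tilde\nabla_XJ=0$ to get $A_{e_4}X=JA_{e_3}X$, from which the matrices \e{4.11} follow; it then computes $H=0$ and $K=-K^D=2(a^2+b^2)$ directly rather than invoking the equality characterization of Theorem \ref{T:4.1}. You instead work from an explicit holomorphic parametrization, extract $\Phi_v=J\Phi_u$, $\Phi_{vv}=-\Phi_{uu}$, $\Phi_{uv}=J\Phi_{uu}$ from the Cauchy--Riemann equations, and then match the resulting second fundamental form to the normal form \e{4.2} (indeed with $\gamma=-\rho$, $\mu=\rho$, $\alpha=2\gamma+\mu=-\rho$, $\delta=0$ in the notation of \e{4.3}, where $\rho=\sqrt{-\<n,n\>}$). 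Both routes require the same sign bookkeeping you flag — that $T^\perp M$ is negative definite and that $J$, being a $g_2$-isometry preserving $TM$, preserves $T^\perp M$ and is skew, so $\{e_3,Je_3\}$ is orthonormal of the right causal character — and your handling of the totally geodesic points ($n=0$ forces $h=0$, so equality is trivial there) correctly covers a case the paper's proof passes over silently. Your version is more computational but more self-contained; the paper's is shorter because the single identity $A_{Je_3}=JA_{e_3}$ replaces the second-derivative calculation.
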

\begin{proof} Let $\psi:M\to {\bf C}^2_1$ be a holomorphic space-like curve in ${\bf C}^2_1$. Let $e_1$ be a unit tangent vector field of $M$. Then $e_2=Je_1$ is a unit tangent vector field of $M$ which is perpendicular to $e_1$. Consider an orthonormal normal frame $\{e_3,e_4\}$ of $M$ in ${\bf C}^2_1$ with $e_4=Je_3$. Then it follows from $\tilde \nabla_X J=0$ that \begin{align}\label{4.10}A_{e_4}X=JA_{e_3}X,\;\; \forall X\in TM.\end{align}
By applying \e{4.10} we know that the shape operator $A$ satisfies
  \begin{align}& \label{4.11} A_{e_3}=\begin{pmatrix} a &b\\b & -a\end{pmatrix}, \;\;\;  A_{e_4}=\begin{pmatrix} -b &a\\ a &b \end{pmatrix}\end{align}
 for some functions $a,b$, with respect to $\{e_1,e_2,e_3,e_4\}$.  
 
 By applying \e{4.11} we obtain $H=0$ and 
 $K=-K^D=2(a^2+b^2)$. Therefore, we obtain the equality case of \e{4.1} identically.
\end{proof}

\section{An application to minimal surfaces in $H^4_2(-1)$.}

Recall that a function $f$ on a space-like surface $M$ is called {\it   logarithm-harmonic}, if  $\Delta (\ln f)=0$ holds identically on $M$, where $\Delta (\ln f):=*d*(\ln f)$ is the Laplacian of $\ln f$ and $*$ is the Hodge star operator. A function $f$ on $M$ is called {\it subharmonic} if $\Delta f\geq 0$ holds everywhere on $M$.

In this section, we establish the following simple geometric characterization of the minimal immersion $\psi_{\phi}: H^2(-\frac{1}{3})\to H^4_2(-1)$ given in section 3.

\begin{theorem}\label{T:5.1} Let $\psi:M\to H^4_2(-1)$ be a non-totally geodesic, minimal immersion of a space-like surface $M$ into $H^4_2(-1)$. Then  \begin{align}\label{5.1} K+K^D\geq  -1\end{align} holds identically on $M$.

If $K+1$ is   logarithm-harmonic,  then the equality sign of \e{5.1} holds identically if and only if  $\psi:M\to H^4_2(-1)$ is congruent to an open portion of the immersion $\psi_{\phi}:H^2(-\frac{1}{3})\to H^4_2(-1)$ which is induced from the map $\phi:{\bf R}^2\to \mathbb E^5_3$ defined by
  \begin{equation}\begin{aligned}&\label{5.2} \phi (s,t)=\Bigg(\sinh \Big(\text{\small$\frac{2s}{\sqrt{3}}$}\Big)-\text{\small$ \frac{t^2}{3}$}-\(\text{\small$\frac{7}{8}+\frac{t^4}{18}$}\)e^{\frac{2s}{\sqrt{3}}}, t+\( \text{\small $\frac{t^3}{3}-\frac{t}{4}$}\)e^{\frac{2s}{\sqrt{3}}},\\& \hskip.1in 
 \text{\small $\frac{1}{2}$} + \text{\small $\frac{t^2}{2}$} e^{\frac{2s}{\sqrt{3}}},  t+ \(\text{\small $\frac{t^3}{3}$}+ \text{\small $\frac{t}{4}$}\)e^{\frac{2s}{\sqrt{3}}},
 \sinh \Big(\text{\small$\frac{2s}{\sqrt{3}}$}\Big)-\frac{t^2}{3}-\text{\small$\(\frac{1}{8}+\frac{t^4}{18}\)$}e^{\frac{2s}{\sqrt{3}}}\Bigg). \end{aligned}\end{equation}
\end{theorem}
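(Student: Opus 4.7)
Plan.

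The inequality \e{5.1} follows at once from Theorem \ref{T:4.1} applied with $c=-1$ and $H=0$, so the entire work lies in the equality case.

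Assume equality holds in \e{5.1} on $M$ and $K+1$ is logarithm-harmonic (the latter implicitly requires $K+1>0$ on all of $M$, so there are no totally geodesic points). By Theorem \ref{T:4.1} one may choose, locally, an orthonormal frame $\{e_1,e_2,e_3,e_4\}$ in which the shape operators have the form \e{4.2}. Minimality ($\operatorname{tr} A_{e_3}=0$) forces $\mu=-\gamma$, and \e{4.4}--\e{4.6} then yield
$$K=2\gamma^2-1,\qquad K^D=-2\gamma^2,\qquad K+1=2\gamma^2,$$
with $\gamma$ nowhere zero on $M$.

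The core step is an application of the Codazzi equation \e{2.7}. A direct computation of $(\bar\nabla_{e_1}h)(e_1,e_2)-(\bar\nabla_{e_2}h)(e_1,e_1)$, using \e{4.4} and the connection forms $\omega_1^2,\omega_3^4$ from \e{2.14}, produces the two scalar identities
$$e_1(\gamma)=\gamma\,\omega_3^4(e_2)-2\gamma\,\omega_1^2(e_2),\qquad e_2(\gamma)=-\gamma\,\omega_3^4(e_1)+2\gamma\,\omega_1^2(e_1),$$
which, through the Hodge star of $M$ ($*\theta^1=\theta^2$, $*\theta^2=-\theta^1$), repackage into the compact 1-form identity $d\ln|\gamma|=*(2\omega_1^2-\omega_3^4)$. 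Applying $d*$ and inserting the structure equations \e{2.15} converts this into
$$\Delta\ln|\gamma|\;=\;2K-K^D\;=\;6\gamma^2-2.$$
Since $\ln(K+1)=\ln 2+2\ln|\gamma|$, the logarithm-harmonic hypothesis $\Delta\ln(K+1)=0$ is equivalent to $12\gamma^2-4=0$, that is $\gamma^2\equiv\tfrac13$ on $M$. Thus $K\equiv-\tfrac13$ and $K^D\equiv-\tfrac23$ are constant with $K^D=2K$, so case (3) of Theorem \ref{T:3.1} identifies $\psi$, up to rigid motions of $H^4_2(-1)$, with an open portion of $\psi_\phi:H^2(-\tfrac13)\to H^4_2(-1)$ defined by \e{5.2}.

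The converse is automatic: on $\psi_\phi$ the Gauss curvature is the constant $-\tfrac13$, hence $K+1$ is constant and trivially logarithm-harmonic, while Remark 4.5 already records that $\psi_\phi$ saturates \e{4.1}. The main obstacle is the Codazzi computation producing the clean 1-form identity $d\ln|\gamma|=*(2\omega_1^2-\omega_3^4)$; once this is in hand, the structure equations \e{2.15} and the harmonicity hypothesis rigidify $\gamma$ to a constant, and Theorem \ref{T:3.1} closes the classification.
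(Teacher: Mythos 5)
Your proposal is correct and follows essentially the same route as the paper: Theorem \ref{T:4.1} gives the inequality and the special shape operators, the Codazzi equation yields the identities for $e_1\gamma$ and $e_2\gamma$, the structure equations \e{2.15} convert these into $\Delta\ln(K+1)=2(2K-K^D)$, and log-harmonicity then forces $K^D=2K=-\tfrac{2}{3}$ so that Theorem \ref{T:3.1}(3) applies. The only cosmetic difference is that you differentiate $\ln|\gamma|$ directly, whereas the paper first computes $\Delta\gamma$ and then passes to $\Delta\ln(K+1)$ by the quotient rule; the substance is identical.
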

\begin{proof} Assume that $\psi:M\to H^4_2(-1)$ is a non-totally geodesic, minimal immersion of a space-like surface $M$ into $H^4_2(-1)$. Then the mean curvature vector $H$ vanishes identically.  Thus, we obtain inequality \e{5.1} from \e{4.1}.

From now on, let us assume that  $M$ is a minimal space-like surface in $H^4_2(-1)$ which satisfies the equality case of \e{5.1} identically. Then Theorem \ref{P:4.1} implies that there exists 
an orthonormal frame $\{e_1,e_2,e_3,e_4\}$ such that  the shape operators  take the following special forms:
   \begin{align}& \label{5.3} A_{e_3}=\begin{pmatrix} \gamma &0\\0 & -\gamma\end{pmatrix}, \; A_{e_4}=\begin{pmatrix} 0 &\gamma\\\gamma & 0 \end{pmatrix}.\end{align}
   Hence, after applying \e{2.4}, \e{2.12} and \e{2.13}, we obtain
    \begin{align}\label{5.4}  &h(e_1,e_1)=-\gamma e_3,\; h(e_1,e_2)=-\gamma e_4,\;  h(e_2,e_2)=\gamma e_3.\end{align}

It follows from \e{2.14}, \e{5.4} and the equation of Codazzi that
\begin{align} &\label{5.5} e_1\gamma=-2\gamma \omega_1^2(e_2) +\gamma \omega_3^4(e_2),
\\& \label{5.6} e_2\gamma=2\gamma \omega_1^2(e_1)-\gamma \omega_3^4(e_1).\end{align}

Since the star operator satisfies $$*(d\gamma)=-(e_2\gamma )\omega^1+(e_1\gamma)\omega^2,$$ Eqs. \e{5.5} and \e{5.6} imply that
\begin{align} &\label{5.7} *d\gamma=\gamma(\omega_3^4-2 \omega_1^2).\end{align}
Thus, we find from \e{2.15} and \e{5.7} that
\begin{equation}\begin{aligned} \label{5.8} \Delta \gamma &=\gamma (2K-K^D)+\frac{*(d\gamma \wedge *d\gamma)}{\gamma},\end{aligned}\end{equation}
where  $\Delta \gamma$ is the Laplacian of $\gamma$ defined by $\Delta \gamma=*d*d\gamma$.

From \e{5.8} we deduce that
\begin{align} &\label{5.9} \Delta \gamma=\gamma (2K-K^D)+\frac{|d\gamma|^2}{\gamma}.
\end{align}

On the other hand, it follows from $$\Delta (\ln (K+1))=*d*d(\ln (K+1)),\;\;\; K=2\gamma^2-1$$ that
\begin{equation}\begin{aligned} \label{5.10} \Delta (\ln (K+1))&=\frac{(K+1) \Delta K-*(dK\wedge *dK)}{(K+1)^2}
\\&  = \frac{2\gamma^2 (4|d\gamma|^2 +4\gamma\Delta\gamma)-16\gamma^2 |d\gamma|^2}{(K+1)^2} 
\\&  = \frac{2\gamma\Delta\gamma-2 |d\gamma|^2}{\gamma^2}.
\end{aligned}\end{equation}
Therefore,  \e{5.9} and \e{5.10} yield
\begin{equation}\begin{aligned} \label{5.11} \Delta (\ln (K+1))&=2(2K-K^D).
\end{aligned}\end{equation}

Now, let us assume that $K+1$ is a  logarithm-harmonic function, then Eq. \e{5.11} gives $K^D=2K$. Hence, after combining this with the equality case of \e{5.1}, we obtain that $K^D=2K=-\frac{2}{3}$. Therefore, by applying Theorem \ref{T:3.1}, we conclude that, up to rigid motions of $H^4_2(-1)$, the minimal surface is an open portion of the minimal surface $\psi_{\phi}:H^2(-\frac{1}{3})\to H^4_2(-1)$ induced from  the map \e{5.2}.

The converse can be verified by direct computation.
\end{proof}

\begin{corollary}\label{C:5.1} Let $\psi:M\to H^4_2(-1)$ be a minimal immersion of a space-like surface $M$ of constant Gauss curvature into $H^4_2(-1)$. 
Then the equality sign of \e{5.1} holds identically if and only if  one of the following two statements holds.

\vskip.04in
{\rm (1)} $K=-1, K^D=0,$ and $\psi$ is totally geodesic.

\vskip.04in
{\rm (2)}  $K^D=2K=-\frac{2}{3}$ and $\psi$ is congruent to an open part of the minimal surface $\psi_{\phi}:H^2(-\frac{1}{3})\to H^4_2(-1)$ induced from  \e{3.1}.
\end{corollary}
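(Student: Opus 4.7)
The plan is to reduce the corollary directly to Theorem \ref{T:5.1}, using the trivial observation that a positive constant is automatically logarithm-harmonic. The only delicate point is the global dichotomy between the totally geodesic and non-totally geodesic regimes on $M$, which is forced to be clean by the constancy of $K$.

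For the forward direction, suppose the equality in \e{5.1} holds identically. By Theorem \ref{T:4.1}, combined with the minimality hypothesis $H=0$ (so that $\alpha+\mu=0$ in \e{4.3}, which together with the equality constraint $\alpha=2\gamma+\mu$ from \e{4.2} forces $\mu=-\gamma$), the shape operators take the form \e{5.3} at every point of $M$, with $\gamma$ vanishing precisely at totally geodesic points. The Gauss equation then yields $K=2\gamma^2-1\ge -1$. Since $K$ is assumed constant, exactly one of two cases occurs: either $K=-1$ and $\gamma\equiv 0$, so that $\psi$ is everywhere totally geodesic and the Ricci equation \e{2.8} gives $K^D=0$, placing us in case (1); or $K>-1$ and $\gamma$ is a nonzero constant, so that $\psi$ is nowhere totally geodesic. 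In this second subcase $K+1=2\gamma^2$ is a positive constant, hence $\ln(K+1)$ is constant and $\Delta(\ln(K+1))\equiv 0$, so $K+1$ is logarithm-harmonic. Theorem \ref{T:5.1} then delivers $K^D=2K=-\tfrac{2}{3}$ and congruence of $\psi$ to an open portion of $\psi_\phi$, which is case (2).

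The reverse direction is a direct verification. For case (1), total geodesy combined with the Gauss and Ricci equations gives $K=-1$ and $K^D=0$, so \e{5.1} becomes the equality $-1=-1$. For case (2), the surface $\psi_\phi:H^2(-\tfrac{1}{3})\to H^4_2(-1)$ recalled in Section 3 satisfies $K=-\tfrac{1}{3}$ and $K^D=-\tfrac{2}{3}$, whence $K+K^D=-1$ holds identically.

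I do not anticipate any real obstacle here: Theorem \ref{T:5.1} performs all the substantive geometric work, and the constancy of $K$ reduces its logarithm-harmonicity hypothesis to a tautology once the totally geodesic stratum has been stripped off. The only point requiring vigilance is that the case split between ``$K=-1$'' and ``$K>-1$'' is truly global, so that a potential mixed surface with both totally geodesic and non-totally geodesic points is excluded; but this is immediate from the assumption that $K$ takes a single constant value on $M$, so the two cases of the corollary are exhaustive and disjoint.
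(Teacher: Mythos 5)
Your proof is correct, but it routes through Theorem \ref{T:5.1}, whereas the paper's own proof bypasses it entirely: the paper simply observes that constancy of $K$ together with the identity $K+K^D=-1$ forces $K^D=-1-K$ to be constant as well, and then invokes the Chen--Suceav\u{a} classification (Theorem \ref{T:3.1}) of minimal space-like surfaces with both curvatures constant, discarding its middle case since $K=K^D=0$ violates $K+K^D=-1$. Your argument instead reproduces the shape-operator normal form \eqref{5.3}, uses $K=2\gamma^2-1$ to split globally into the totally geodesic stratum ($\gamma\equiv 0$) and the nowhere-totally-geodesic stratum ($\gamma$ a nonzero constant), and in the latter case notes that the positive constant $K+1$ is trivially logarithm-harmonic so that Theorem \ref{T:5.1} applies. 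Both arguments are sound and ultimately rest on Theorem \ref{T:3.1} (your route just reaches it through the proof of Theorem \ref{T:5.1}); the paper's version is shorter because it never needs the normal form or the harmonicity hypothesis, while yours has the virtue of exhibiting the corollary as a genuine specialization of the theorem it follows, and of making explicit the global dichotomy between the two cases, which the paper leaves implicit. Your verification of the converse direction matches the paper's one-line ``the converse is trivial.''
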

\begin{proof} Let $\psi:M\to H^4_2(-1)$ be a minimal immersion of a space-like surface $M$  into $H^4_2(-1)$. If the Gauss curvature $K$ is constant and the equality sign of \e{5.1} holds, then both $K$ and $K^D$ are constant. Therefore, by applying Theorem \ref{T:3.1}, we obtain either Case (1) or Case (2).

The converse is trivial. \end{proof}

\section{Space-like minimal surfaces in $\mathbb E^4_2$  satisfying the equality.}

It follows from Lemma \ref{L:4.1} that there exist infinitely many non-totally geodesic, minimal space-like surfaces in $\mathbb E^4_2$ which satisfy the equality case of inequality \e{4.1} identically (with $H=c=0$). 

On the other hand, we have the following.

\begin{proposition}\label{P:61} Let $\psi:M\to {\mathbb E}^4_2$ be a minimal immersion of a space-like surface $M$  into the pseudo-Euclidean 4-space $\mathbb E^4_2$. Then  \begin{align}\label{6.1} K\geq -K^D\end{align} holds identically on $M$.

If $M$ has constant Gauss curvature, then 
 the equality sign of \e{6.1} holds identically if and only if $M$ is a totally geodesic surface.
\end{proposition}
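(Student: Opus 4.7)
The plan for the first assertion is to apply Theorem \ref{T:4.1} directly. Since $\mathbb{E}^4_2$ corresponds to $c=0$ and minimality gives $\<H,H\>=0$, inequality \e{4.1} collapses to $K+K^D\geq 0$, which is exactly \e{6.1}. No further work is needed here.

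For the equality case, I would assume $K$ is constant and $K+K^D\equiv 0$, and consider the open set $U=\{p\in M : h(p)\neq 0\}$, with the aim of showing $U=\emptyset$. On $U$ the characterization in Theorem \ref{T:4.1} provides an adapted orthonormal frame $\{e_1,e_2,e_3,e_4\}$ with shape operators of the form \e{4.2}. Minimality forces $\mathrm{tr}\,A_{e_3}=2\gamma+2\mu=0$, hence $\mu=-\gamma$; substituting into \e{4.5}--\e{4.6} yields $K=2\gamma^2$ and $K^D=-2\gamma^2$. In particular the second fundamental form reduces exactly to the form \e{5.4} used in the proof of Theorem \ref{T:5.1}.

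From this point I would recycle the Codazzi computation from Theorem \ref{T:5.1}. The Codazzi equation applied to \e{5.4} produces the connection-form identities \e{5.5}--\e{5.6} for $\gamma$. Since $K=2\gamma^2$ is constant and $\gamma\neq 0$ on $U$, one has $d\gamma\equiv 0$ on $U$, and \e{5.5}--\e{5.6} then collapse to $\omega_3^4=2\omega_1^2$ on $U$. Taking exterior derivatives and invoking \e{2.15} converts this into $K^D=2K$, and combining with the equality $K=-K^D$ gives $3K=0$. Hence $K=0$ on $U$, which forces $\gamma=0$---contradicting the definition of $U$. Therefore $U=\emptyset$, $h\equiv 0$, and $M$ is totally geodesic. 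The converse is immediate.

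The only real obstacle is checking that the Codazzi derivation of \e{5.5}--\e{5.6} from Theorem \ref{T:5.1} transfers unchanged to the present setting; this is routine, because the second fundamental form has the identical shape \e{5.4} in both cases and the ambient curvature $c$ does not enter the Codazzi equation \e{2.7}.
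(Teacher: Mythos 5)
Your proposal is correct and follows essentially the same route as the paper: deduce \e{6.1} from Theorem \ref{T:4.1} with $c=0$ and $H=0$, then in the equality case use minimality to reduce the shape operators to the form \e{6.2}, apply Codazzi to get $2\omega_1^2=\omega_3^4$ when $\gamma$ is constant, differentiate to obtain $2K=K^D$, and combine with $K=-K^D$ to force $K=0$ and hence total geodesy. Your reformulation via the open set $U=\{h\neq 0\}$ is only a cosmetic repackaging of the paper's contradiction argument.
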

\begin{proof} Let $\psi:M\to \mathbb E^4_2$ be a minimal immersion of a space-like surface $M$  into $\mathbb E^4_2$. Then inequality \e{6.1} follows immediately from Theorem \ref{P:4.1}.

Assume that  the equality case of \e{6.1} holds identically. Then Theorem \ref{P:4.1} implies that there exists 
an orthonormal frame $\{e_1,e_2,e_3,e_4\}$ such that  the shape operator $A$ takes the special forms:
   \begin{align}& \label{6.2} A_{e_3}=\begin{pmatrix} \gamma &0\\0 & -\gamma\end{pmatrix}, \; A_{e_4}=\begin{pmatrix} 0 &\gamma\\\gamma & 0 \end{pmatrix}.\end{align}
From  \e{6.2} and the equation of Codazzi we find
\begin{align} &\label{6.3} e_1\gamma=-2\gamma \omega_1^2(e_2) +\gamma \omega_3^4(e_2),
\\& \label{6.4} e_2\gamma=2\gamma \omega_1^2(e_1)-\gamma \omega_3^4(e_1).\end{align}

 If the Gauss curvature $K$ is a nonzero constant, then the function $\gamma$ is a nonzero constant. In this case, \e{6.3} and \e{6.4} imply that 
\begin{align} &\label{6.5}2 \omega_1^2= \omega_3^4.\end{align}
Thus, after taking exterior differentiation of \e{6.5} and applying \e{2.15}, we obtain $2 K=K^D$. Combining this with the equality of \e{6.1} yields $K=0$, which is a contradiction. Therefore,  we must have $K=2\gamma^2=0$. Consequently, $M$ is totally geodesic in $\mathbb E^4_2$.
 
 The converse is trivial.
\end{proof}

\begin{proposition}\label{P:6.2} Let $\psi:M\to {\mathbb E}^4_2$ be a minimal immersion of a space-like surface $M$  into $\mathbb E^4_2$. We have

\vskip.04in
{\rm (1)}  If the equality sign of \e{6.1} holds identically, then $K$ is a non-logarithm-harmonic function.

\vskip.04in
{\rm (2)} If $M$ contains no totally geodesic points and  the equality sign of \e{6.1} holds identically on $M$, then $\ln K$ is  subharmonic. \end{proposition}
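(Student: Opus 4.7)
The plan is to replay the computation that gave \e{5.11}, but in the flat setting $c=0$. By Theorem \ref{T:4.1} (applied with $H=c=0$), the equality in \e{6.1} places the shape operators in the special form \e{6.2}, and the Gauss equation then gives $K=2\gamma^2$; in particular $K\geq 0$, with $K>0$ exactly at the non-totally-geodesic points. Substituting \e{6.2} into the Codazzi equation reproduces \e{6.3}--\e{6.4}, which are equivalent to
\begin{align*}
*d\gamma=\gamma(\omega_3^4-2\omega_1^2).
\end{align*}

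Differentiating this identity and applying $d\omega_1^2=-K(*1)$ and $d\omega_3^4=-K^D(*1)$ from \e{2.15}, exactly as in the derivation of \e{5.9}, yields
\begin{align*}
\Delta\gamma=\gamma(2K-K^D)+\frac{|d\gamma|^2}{\gamma}.
\end{align*}
Then, using the standard identity $\Delta(\ln f)=(f\Delta f-|df|^2)/f^2$ with $f=K=2\gamma^2$, I would compute
\begin{align*}
\Delta(\ln K)=\frac{2(\gamma\Delta\gamma-|d\gamma|^2)}{\gamma^2}=2(2K-K^D),
\end{align*}
and then invoke the equality hypothesis $K^D=-K$ from \e{6.1} to conclude $\Delta(\ln K)=6K$.

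Both parts of the proposition follow at once from this single identity. For part (1), at any non-totally-geodesic point $K>0$ gives $\Delta(\ln K)=6K>0$, so $K$ cannot be logarithm-harmonic (if $M$ were totally geodesic throughout, $\ln K$ would not even be defined, so the statement is trivial in that degenerate case). For part (2), the hypothesis that $M$ has no totally geodesic points forces $K>0$ throughout $M$, whence $\Delta(\ln K)=6K\geq 0$, i.e., $\ln K$ is subharmonic. I do not anticipate any substantive obstacle: the argument is essentially a rerun of the Theorem \ref{T:5.1} computation with $c=0$ in place of $c=-1$, and the only point requiring mild care is tracking the disappearance of the $K\mapsto K+c$ shift used in \e{5.10} when the ambient is flat, so that the conclusion involves $\ln K$ itself rather than $\ln(K+1)$.
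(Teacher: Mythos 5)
Your proposal is correct and follows essentially the same route as the paper: reduce to the shape-operator form \e{6.2} via Theorem \ref{T:4.1}, run the Codazzi/connection-form computation of section 5 to get $\Delta(\ln K)=2(2K-K^D)$, and then use $K=-K^D$ to conclude $\Delta(\ln K)=6K=12\gamma^2>0$ off the totally geodesic set, which rules out logarithm-harmonicity and gives subharmonicity. The only cosmetic difference is that the paper phrases part (1) as a contradiction ($2K=K^D$ together with $K=-K^D$ forces $K=0$, where $\ln K$ is undefined), while you read the positivity of $\Delta(\ln K)$ directly; these are the same argument.
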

\begin{proof} 
Assume that $M$ is a minimal space-like surface in $\mathbb E^4_2$ which satisfies the equality case of \e{6.1}, i.e., $K=-K^D$ identically. Then Theorem \ref{P:4.1} implies that there exists 
an orthonormal frame $\{e_1,e_2,e_3,e_4\}$ such that  the shape operator $A$  takes the special forms given by \e{6.2}.

 From  \e{6.2} and the equation of Codazzi we obtain \e{6.3} and \e{6.4}.
Thus, we may apply  the same arguments as in section 5 to obtain that
\begin{equation}\begin{aligned} \label{6.6} \Delta (\ln K)&=2(2K-K^D)
\end{aligned}\end{equation}
at each non-totally geodesic point.
Hence, after combining this with $K=-K^D$, we obtain $K=0$. But this is impossible, since in this case $\ln K$ is undefined.
This proves statement (1).

Next, assume that $M$ contains no totally geodesic points and that the equality sign of \e{6.1} holds identically on $M$. Then, we find from \e{6.2}, \e{6.6} and $K=-K^D$ that $$\Delta (\ln K)=6 K=12\gamma^2>0,$$ which implies that $\ln K$ is a subharmonic function. This proves statement (2).
\end{proof}

\section{Space-like minimal surfaces in $S^4_2(1)$  satisfying the equality.}

Now, we study space-like minimal surfaces in $S^4_2(1)$ satisfying the equality case of inequality \e{4.1}.

\begin{proposition}\label{P:7.1} Let $\psi:M\to S^4_2(1)$ be a minimal immersion of a space-like surface $M$ into the neutral pseudo-sphere $S^4_2(1)$. Then  \begin{align}\label{7.1} K+K^D\geq 1\end{align} holds identically on $M$.

If $M$ has constant Gauss curvature, then the equality sign of \e{7.1} holds identically if and only if $M$ is a totally geodesic surface.
\end{proposition}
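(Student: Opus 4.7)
The plan is to follow exactly the template already laid down for Propositions \ref{P:61} and \ref{P:6.2}. For the inequality \e{7.1}, I would simply specialize Theorem \ref{T:4.1} to the present setting: by minimality we have $\<H,H\>=0$ and by hypothesis $c=1$, so \e{4.1} reads $K+K^D\ge 1$ identically. No extra work is needed.

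For the equality case under constant Gauss curvature, I would invoke Theorem \ref{T:4.1} to obtain an orthonormal frame $\{e_1,e_2,e_3,e_4\}$ in which the shape operators take the special form \e{4.2}. Because $H=0$ we get $2\gamma+\mu=-\mu$, hence $\mu=-\gamma$ and the $e_3$-shape operator becomes $A_{e_3}=\operatorname{diag}(\gamma,-\gamma)$, while $A_{e_4}$ is off-diagonal with entries $\gamma$. Substituting $\alpha=\gamma$, $\mu=-\gamma$, $\delta=0$ and $c=1$ into the Gauss identity \e{4.5} and the formula \e{4.6} for $K^D$ gives
\begin{align*}
K=2\gamma^{2}+1,\qquad K^{D}=-2\gamma^{2},
\end{align*}
so equality in \e{7.1} is automatic and the whole question reduces to analyzing the function $\gamma$.

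Assume now that $K$ is constant, so $\gamma^{2}$ is constant. If $\gamma$ is a nonzero constant, the equation of Codazzi applied to \e{5.4} (with the same computation that produced \e{5.5}--\e{5.6} and \e{6.3}--\e{6.4}) forces
\begin{align*}
2\omega_{1}^{2}=\omega_{3}^{4}.
\end{align*}
Taking the exterior derivative and using \e{2.15} yields $2K=K^{D}$. Combined with the equality $K+K^{D}=1$ this gives $K=\tfrac{1}{3}$, contradicting $K=2\gamma^{2}+1\ge 1$. Hence $\gamma\equiv 0$, which by \e{5.4} says $h\equiv 0$, i.e., $M$ is totally geodesic. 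The converse is immediate: a totally geodesic space-like surface in $S^{4}_{2}(1)$ has $K=1$ and $K^{D}=0$ by the equation of Gauss.

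The main obstacle, if any, is the one already resolved above: ruling out nontrivial constant $\gamma$ via Codazzi. Note that, in contrast with the $\mathbb{E}^{4}_{2}$ case treated in Proposition \ref{P:61} where the contradiction came from $K=0$, here the sign of $c$ flips the argument and the contradiction comes from $K=2\gamma^{2}+1\ge 1$ being incompatible with $K=\tfrac{1}{3}$, so one must pay attention to the fact that the value $K=1$ is attained precisely at totally geodesic points.
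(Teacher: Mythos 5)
Your proposal is correct and follows essentially the same route as the paper: specialize Theorem \ref{T:4.1} with $H=0$, $c=1$ for the inequality, then use the Codazzi equation with constant $\gamma$ to force $2\omega_1^2=\omega_3^4$, hence $2K=K^D$, and derive a contradiction with the equality $K+K^D=1$. The only cosmetic difference is that you phrase the contradiction as $K=\tfrac13$ versus $K=2\gamma^2+1\ge 1$, whereas the paper phrases it as $K^D=\tfrac23$ versus $K^D=-2\gamma^2\le 0$; these are equivalent under $K+K^D=1$.
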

\begin{proof} Assume that $\psi:M\to S^4_2(1)$ is a  minimal immersion of a space-like surface $M$ into $S^4_2(1)$. Then inequality \e{4.1} in Theorem \ref{P:4.1} reduces to inequality \e{7.1}.

Suppose that  the equality case of \e{7.1} holds identically on $M$, then Theorem \ref{P:4.1} implies that there exists 
an orthonormal frame $\{e_1,e_2,e_3,e_4\}$ such that  the shape operator $A$  takes the following special forms:
   \begin{align}& \label{7.2} A_{e_3}=\begin{pmatrix} \gamma &0\\0 & -\gamma\end{pmatrix}, \; A_{e_4}=\begin{pmatrix} 0 &\gamma\\\gamma & 0 \end{pmatrix}.\end{align}
   Hence, by applying \e{2.4}, \e{2.12} and \e{2.13}, we know that the second fundamental form $h$ satisfies
    \begin{align}\label{7.3}  &h(e_1,e_1)=-\gamma e_3,\; h(e_1,e_2)=-\gamma e_4,\;  h(e_2,e_2)=\gamma e_3.\end{align}

It follows from \e{7.3} that the Gauss curvature of $M$ is given by $K=1+2\gamma^2$.  Now, let us assume that the Gauss curvature $K$ is constant. Then $\gamma$ is constant. Let us suppose that $M$ is non-totally geodesic in $S^4_2(1)$. 
Then, by applying   \e{7.3} and the equation of Codazzi, we find
\begin{align} &\label{7.4}2 \omega_1^2= \omega_3^4.\end{align}
Thus, after taking exterior differentiation of \e{7.7} and applying \e{2.15}, we obtain \begin{align} &\label{7.5}2 K=K^D.\end{align}
By combining \e{7.5} with the equality of \e{7.1}, we get $K^D=\frac{2}{3}$.  
 
 On the other hand, it follows from \e{2.11} and  \e{7.2} that $K^D=-2\gamma^2\leq 0$, which contradicts to $K^D=\frac{2}{3}$. Consequently, $M$ must be totally geodesic in $S^4_2(1)$. 

Conversely, if $M$ is totally geodesic in $S^4_2(1)$, then we have $K=1$ and $K^D=0$. So, we get $K+K^D=1$, which is exactly the equality case of \e{7.1}.
\end{proof}

Finally, we prove the following.

\begin{proposition}\label{P.7.2} Let $\psi:M\to S^4_2(1)$ be a minimal immersion of a space-like surface $M$ into $S^4_2(1)$. We have

\vskip.04in
{\rm (1)} If the equality sign of \e{7.1} holds identically, then $K-1$ is non-logarithm-harmonic.

\vskip.04in
{\rm (2)} If $M$ contains no totally geodesic points and if the equality case of \e{7.1} holds, then $\ln (K-1)$ is subharmonic.
\end{proposition}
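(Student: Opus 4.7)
The plan closely mirrors the computations of Section~5 and the proof of Proposition~\ref{P:6.2}. Assume that $M$ is a minimal space-like surface in $S^4_2(1)$ satisfying the equality case of \e{7.1}. By Theorem~\ref{T:4.1}, combined with the minimality condition (which forces $\mu=-\gamma$ in \e{4.2}), there is an orthonormal frame $\{e_1,e_2,e_3,e_4\}$ in which the shape operators take the form \e{7.2}. The Gauss equation then yields $K=1+2\gamma^2$, so $K-1=2\gamma^2$, while a short computation from \e{2.8} and \e{2.11} gives $K^D=-2\gamma^2$; in particular $K+K^D=1$ automatically, consistent with the hypothesis.

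Next I would apply the Codazzi equation \e{2.7} to the second fundamental form \e{7.3} exactly as in Section~5, obtaining
\begin{align*}
e_1\gamma &= -2\gamma\,\omega_1^2(e_2)+\gamma\,\omega_3^4(e_2),\\
e_2\gamma &= 2\gamma\,\omega_1^2(e_1)-\gamma\,\omega_3^4(e_1).
\end{align*}
These imply $*d\gamma=\gamma(\omega_3^4-2\omega_1^2)$, and \e{2.15} then gives $\Delta\gamma=\gamma(2K-K^D)+|d\gamma|^2/\gamma$ on the open set where $\gamma\neq 0$. Since $K-1=2\gamma^2$ has the same structural form as $K+1=2\gamma^2$ did in the hyperbolic setting, the identity \e{5.10} transports verbatim with $K-1$ in place of $K+1$, and combining it with the formula for $\Delta\gamma$ yields the key relation
$$
\Delta(\ln(K-1))=2(2K-K^D).
$$

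Substituting $K^D=1-K$ from the equality hypothesis reduces this to $\Delta(\ln(K-1))=2(3K-1)$, and both conclusions follow immediately. For part~(1), if $K-1$ were logarithm-harmonic then $3K=1$, i.e., $K=\tfrac13$, contradicting $K=1+2\gamma^2\geq 1$. For part~(2), the hypothesis that $M$ contains no totally geodesic points forces $\gamma\neq 0$ everywhere, so $K>1$ and therefore $\Delta(\ln(K-1))=2(3K-1)>4>0$, proving that $\ln(K-1)$ is subharmonic.

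I do not anticipate a genuine obstacle here: the only subtlety is verifying the sign in the Gauss equation carefully, so that the positive quantity $2\gamma^2$ ends up equal to $K-1$ (the spherical case) rather than $K+1$ (the hyperbolic case of Section~5). Once this sign is fixed, the analytic content of Sections~5 and~6 transports to $S^4_2(1)$ with no further modification, and the two conclusions are essentially arithmetic consequences of the single identity $\Delta(\ln(K-1))=2(3K-1)$.
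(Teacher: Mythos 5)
Your proposal is correct and follows essentially the same route as the paper: both reduce the shape operators to the form \e{7.2} via Theorem \ref{T:4.1} and minimality, derive the key identity $\Delta(\ln(K-1))=2(2K-K^D)$ by the Codazzi/connection-form computation of Section 5, and then obtain the two conclusions. The only cosmetic difference is in part (1), where you phrase the contradiction as $K=\tfrac13$ versus $K=1+2\gamma^2\geq 1$, while the paper phrases the equivalent contradiction as $K^D=\tfrac23$ versus $K^D=-2\gamma^2\leq 0$.
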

\begin{proof} Assume that $M$ is a minimal space-like surface of $S^4_2(1)$ which satisfies the equality case of \e{7.1} identically,. Then we have $K+K^D=1$.  Moreover, from Theorem  \ref{P:4.1} we know that there exists 
an orthonormal frame $\{e_1,e_2,e_3,e_4\}$ such that  the shape operator $A$  satisfies
   \begin{align}& \label{7.6} A_{e_3}=\begin{pmatrix} \gamma &0\\0 & -\gamma\end{pmatrix}, \; A_{e_4}=\begin{pmatrix} 0 &\gamma\\\gamma & 0 \end{pmatrix}.\end{align}
Hence, we may applying the same arguments as in section 5 to obtain that
\begin{equation}\begin{aligned} \label{7.7} \Delta (\ln (K-1))&=2(2K-K^D).
\end{aligned}\end{equation}

If $K-1$ is logarithm-harmonic, then Eq. \e{7.7} yields $K^D=2K$. Thus, after combining this with the equality $K+K^D=1$, we obtain \begin{align}\label{7.8} K^D=\frac{2}{3}.\end{align}

On the other hand, we find from \e{7.6} that $K^D=-2\gamma^2\leq 0$, which contradicts to \e{7.8}. Consequently, $K-1$ cannot be  a logarithm-harmonic function. This proves statement (1).

Next, assume that $M$ contains no totally geodesic points and if the equality case of \e{7.1} holds. Then, we find from \e{7.6} and \e{7.7} that
\begin{equation}\begin{aligned} \label{7.9} \Delta (\ln (K-1))&=4(3\gamma^2+1)>0.
\end{aligned}\end{equation}
Hence, $\ln(K-1)$ is a subharmonic function. This proves statement (1).
\end{proof}

  \end{document}